\numberwithin{equation}{section} \numberwithin{figure}{section}
\numberwithin{table}{section} \setlength{\oddsidemargin}{0in}
\theoremstyle{plain}
\theoremstyle{definition}
\newtheorem{thm}{Theorem}
\numberwithin{equation}{section} \numberwithin{lem}{section}
\numberwithin{thm}{section} \numberwithin{cor}{section}
\numberwithin{pro}{section} \numberwithin{rem}{section}
\begin{document}

\title[A Liouville theorem for the vectorial Allen-Cahn energy]
{A Liouville theorem for minimizers with finite potential energy for the vectorial Allen-Cahn equation}



\author{Christos Sourdis} \address{Department of Mathematics and Applied Mathematics, University of
Crete.}
              \email{csourdis@tem.uoc.gr}           




\maketitle

\begin{abstract}
We prove that if a globally minimizing solution to the vectorial
Allen-Cahn equation has finite potential energy, then it is a
constant.
\end{abstract}


Consider the semilinear elliptic system
\begin{equation}\label{eqEq}
\Delta u=\nabla W(u)\ \ \textrm{in}\ \ \mathbb{R}^n,\ \ n\geq 1,
\end{equation}
where $W:\mathbb{R}^m\to \mathbb{R}$, $m\geq 1$, is sufficiently
smooth and nonnegative. It has been recently shown in
\cite{alikakosBasicFacts} that  each nonconstant solution to the
system (\ref{eqEq}) satisfies:
\begin{equation}\label{eqGrande}
\int_{B_R}^{}\left\{\frac{1}{2}|\nabla u|^2+ W\left(u\right)
\right\}dx\geq \left\{\begin{array}{ll}
                        c R^{n-2} & \textrm{if}\ n\geq 3,   \\
                          &     \\
                        c \ln R  & \textrm{if}\ n=2,
                      \end{array}
\right.
\end{equation}
for all $R>1$, and some $c>0$, where $B_R$ stands for the
$n$-dimensional ball of radius $R$, centered at the origin.

On the other side, if additionally $W$ vanishes at least at one
point, it is easy to see that \begin{equation}\label{equpper}
\int_{B_R}^{}\left\{\frac{1}{2}|\nabla u|^2+ W\left(u\right)
\right\}dx\leq CR^{n-1},\ \ R>0,
\end{equation}
for some $C>0$ (see \cite{fuscoPreprint}).

 The
system (\ref{eqEq}) with $W\geq 0$ vanishing at a finite number of
global minima (typically nondegenerate), and coercive at infinity,
is used to model multi-phase transitions (see \cite{fuscoPreprint}
and the references therein). In this case, the system (\ref{eqEq})
is frequently referred to as the vectorial Allen-Cahn equation. In
\cite{sourdis14}, we showed the following theorem for globally
minimizing solutions (see \cite{fuscoCPAA,sourdis14} for the precise
definition).
\begin{thm}\label{thmMine} Assume that $W\in
C^1(\mathbb{R}^m;\mathbb{R})$, $m\geq 1$, and that there exist
finitely many $N\geq 1$ points $a_i\in \mathbb{R}^m$ such that
\begin{equation}\label{eqpoints}
W(u)>0\ \ \textrm{in}\ \ \mathbb{R}^m\setminus \{a_1,\cdots, a_N \},
\end{equation}
and there exists small $r_0>0$ such that the functions
\begin{equation}\label{eqmonot}
r\mapsto W(a_i+r\nu),\ \ \textrm{where}\ \ \nu \in \mathbb{S}^1, \ \
\textrm{are strictly increasing\ for}\ r\in (0,r_0),\ \
i=1,\cdots,N.
\end{equation}
Moreover, we assume that
\begin{equation}\label{eqinf}\liminf_{|u|\to \infty}
W(u)>0.\end{equation}

 If $u\in C^2(\mathbb{R}^2;\mathbb{R}^m)$ is
a bounded, nonconstant, and globally minimizing solution to the
elliptic system (\ref{eqEq}) with $n=2$, there exist   constants
$c_0, R_0>0$ such that
\[
\int_{B_R}^{}W\left(u(x) \right) dx\geq c_0R\ \ \textrm{for}\ \
R\geq R_0.
\]
\end{thm}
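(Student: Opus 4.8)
The plan is to reduce the statement to a \emph{density (clearing-out) estimate centered at a single transition point}, and then to establish that estimate by exploiting global minimality through a capping competitor, with the monotonicity hypothesis (\ref{eqmonot}) making the competitor admissible. First I record two elementary reductions. Since $u$ is bounded its range lies in some $\overline{B_M}\subset\bbR^m$, and since $W$ is continuous and by (\ref{eqpoints}) vanishes exactly on the finite set $\{a_1,\dots,a_N\}$, for each $\lambda>0$ the number $\delta(\lambda):=\min\{W(p):p\in\overline{B_M},\ \mathrm{dist}(p,\{a_i\})\ge\lambda\}$ is strictly positive. Because $u$ is continuous and nonconstant while $\{a_i\}$ is finite, the image of $u$ is not contained in $\{a_i\}$, so there is a point $x_0$ and $\lambda_1\in(0,r_0)$ with $\mathrm{dist}(u(x_0),\{a_i\})\ge\lambda_1$. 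Interior elliptic estimates for (\ref{eqEq}) (whose right-hand side $\nabla W(u)$ is bounded on the range of $u$) give a uniform bound $|\nabla u|\le L$, so $\mathrm{dist}(u(\cdot),\{a_i\})\ge\lambda_1/2$ on $B_\sigma(x_0)$ with $\sigma=\lambda_1/(2L)$, whence the \emph{seed}
\[
\int_{B_\sigma(x_0)}W(u)\,dx\ \ge\ \delta(\lambda_1/2)\,\pi\sigma^2\ =:\ \eta_0\ >\ 0 .
\]
Since $x_0$ is fixed and $B_{R-|x_0|}(x_0)\subset B_R$, it suffices to prove $\int_{B_r(x_0)}W(u)\,dx\ge c\,r$ for large $r$; the stated bound then follows with $R_0=2|x_0|$ and $c_0=c/2$.

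The engine is a density estimate for the phases $\Omega_i:=\{x:\mathrm{dist}(u(x),a_i)<\lambda\}$, $\lambda\le\lambda_1$. Following the Caffarelli--C\'ordoba/Alikakos--Fusco scheme, I would prove a volume-density alternative: there is $\mu_0>0$ so that for each $i$ and each ball $B_r(y)$, either $|\Omega_i\cap B_r(y)|\le\mu_0$ or $|\Omega_i\cap B_r(y)|\ge c\,r^2$. The mechanism is a differential inequality obtained from minimality: given a ball on which $\Omega_i$ carries definite volume, one averages over radii to find a good circle $\partial B_\rho$ with small energy $\int_{\partial B_\rho}\{\frac12|\nabla_T u|^2+W\}\,d\sigma$, replaces $u$ inside $B_\rho$ by the competitor $v$ that radially interpolates from $u|_{\partial B_\rho}$ down to the constant $a_i$ over an annulus of optimized width and equals $a_i$ further in, and compares $E(u;B_\rho)\le E(v;B_\rho)$ (a compactly supported perturbation, hence admissible for a global minimizer). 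The \emph{crucial} use of (\ref{eqmonot}) is that $v$ is a shrinking of $u|_{\partial B_\rho}$ toward $a_i$, so $W(v)\le W(u|_{\partial B_\rho})$ pointwise and the potential cost of $v$ is controlled by $\int_{\partial B_\rho}W$. This yields an inequality of the form $\frac{d}{dr}\,|\Omega_i\cap B_r(y)|^{1/2}\ge c$, integrating to the claimed quadratic lower bound once the phase is present.

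With the alternative in hand I run the following dichotomy at $x_0$. The seed gives that near $x_0$ the transition set is present, and by (\ref{eqGrande}) the total energy is unbounded; if a single phase $\Omega_i$ had full density around $x_0$ (so $u$ is within $\lambda$ of $a_i$ off a bounded set), then shrinking $u$ toward $a_i$ via (\ref{eqmonot}) strictly lowers the energy, forcing $u\equiv a_i$ and contradicting nonconstancy. Hence at least two phases meet, and the volume-density alternative upgrades their presence in $B_r(x_0)$ to comparability with $|B_r|$; the relative isoperimetric inequality then forces $\mathcal{H}^1(\partial^*\Omega_i\cap B_r(x_0))\ge c\,r$. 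Finally, since $|\nabla u|\le L$, a tube of width $\sim\lambda/(2L)$ on the $a_i$-side of this interface lies in $\{\lambda/2\le\mathrm{dist}(u,a_i)\le\lambda\}$, where $u$ is far from every well and hence $W(u)\ge\delta(\lambda/2)$; this tube has area $\ge c'r$, so $\int_{B_r(x_0)}W(u)\,dx\ge\delta(\lambda/2)\,c'\,r$, as required.

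The main obstacle is the clearing-out step, namely converting ``small energy on a circle of length $\sim r$'' into ``uniform proximity to one well inside,'' and making the capping competitor genuinely cheaper than $u$. The delicate point is that an amplitude-$\lambda$ deviation of $u$ from $a_i$ along $\partial B_\rho$ would contribute $\sim\lambda^2 r$ to the Dirichlet energy, so one must first use the smallness of $\int_{\partial B_\rho}W$ to force the deviation to be $o(1)$ rather than merely $<\lambda$, then optimize the interpolation width, and separately excise the short ``bad arcs'' (whose total length is controlled by $\int_{\partial B_\rho}W$ via Chebyshev) where $u$ wanders between wells; here (\ref{eqpoints}) and (\ref{eqinf}) guarantee the uniform well-separation and the positive transition barrier, and (\ref{eqmonot}) keeps the caps cheap. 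Granting Theorem~\ref{thmMine}, the Liouville statement of the abstract is immediate: a nonconstant minimizer with $\int_{\bbR^2}W(u)\,dx<\infty$ would violate $\int_{B_R}W(u)\,dx\ge c_0R\to\infty$, so it must be constant, which is also consistent with the upper bound (\ref{equpper}).
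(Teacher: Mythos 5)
Your reduction to a seed ball at $x_0$ and your final coarea-type assembly are fine in spirit, but the dichotomy at the heart of your argument has a genuine gap, and it sits exactly where the theorem's difficulty lives. You claim that if no second phase acquires volume near $x_0$, then ``a single phase $\Omega_i$ has full density around $x_0$ (so $u$ is within $\lambda$ of $a_i$ off a bounded set).'' This is a non sequitur: the complement of $\Omega_i$ consists not only of the other phases (whose volume your density alternative controls) but also of the diffuse interface $\{x:\mathrm{dist}(u(x),\{a_1,\dots,a_N\})\geq\lambda\}$, whose area in $B_R$ is bounded only by the energy, i.e.\ by $CR$ via (\ref{equpper}) --- it can be unbounded, and the regime the theorem addresses is precisely the one where it grows linearly. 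Consequently, ``other phases have volume $\leq\mu_0$ at every scale'' does not give you any circle on which $\mathrm{dist}(u,a_i)$ is uniformly small, and such pointwise boundary control is exactly what makes your shrinking/projection competitor admissible (a competitor for a global minimizer must agree with $u$ on the boundary of the region where it is modified). A priori, a nonconstant minimizer could remain in a single phase off an unbounded interface set --- say, a string of ``bumps'' with $\mathrm{dist}(u,a_i)\in[\lambda,\Lambda]$ marching to infinity --- for which your argument produces no second phase, no relative isoperimetric inequality, and hence no linear lower bound; yet these are exactly the configurations one must handle.

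The correct either/or, and the one the paper's own proof of Theorem \ref{thmNote} displays (Theorem \ref{thmMine} itself is only cited from \cite{sourdis14}, which uses the same mechanism), is radial rather than volumetric: \emph{either} every large circle $\partial B_s$ carries a point with $\mathrm{dist}(u,\{a_j\})\geq\lambda$, in which case the uniform gradient bound yields disjoint seed balls of fixed radius $\sigma$ along an arithmetic progression of radii, and summing gives $\int_{B_R}W(u)\,dx\geq cR$ directly --- no density estimate or isoperimetry needed; \emph{or} there is a sequence of circles on which $\max\mathrm{dist}(u,\{a_j\})\to 0$ (smallness obtained via the Ginzburg--Landau covering/Chebyshev argument of \cite{bethuel}, as in the paper's proof), and then the variational maximum principle of \cite{alikakosPreprint}, which is where (\ref{eqmonot}) and global minimality enter, propagates this smallness inward and forces $u\equiv a_i$, contradicting nonconstancy. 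Your proposal would need precisely this step to close its gap, while the Caffarelli--C\'ordoba density machinery you invoke (itself a substantial theorem in the vectorial setting, not a routine lemma) addresses the wrong alternative. Two smaller points: the $w$-neighborhood of a curve of length $\ell$ need not have area $\geq cw\ell$ (the curve can fold), so your ``tube'' step should be replaced by the coarea formula for $\mathrm{dist}(u(\cdot),a_i)$ together with $|\nabla u|\leq L$ and the fact that each level $t\in(\lambda/2,\lambda)$ separates the phases; and your closing remark that the Liouville theorem is immediate from Theorem \ref{thmMine} is true only for $n=2$, whereas Theorem \ref{thmNote} holds for all $n\geq 2$ via the rescaling argument.
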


In view of (\ref{equpper}), the above result captures the optimal
growth rate in the case $n=2$. The purpose of this note is to
establish the following Liouville type theorem which  holds in any
dimension. Similarly to \cite{sourdis14}, we combine ideas from the
study of vortices in the Ginzburg-Landau model \cite{bethuel} with
variational maximum principles from the study of the vector
Allen-Cahn equation \cite{alikakosPreprint}.
\begin{thm}\label{thmNote}
Let $W$ be as in Theorem \ref{thmMine}. Suppose that $u\in
C^2(\mathbb{R}^n;\mathbb{R}^m)$, $n\geq 2$, is a bounded and
globally minimizing solution to the elliptic system (\ref{eqEq})
such that
\[
 \int_{\mathbb{R}^n}^{}W\left(u(x) \right) dx<\infty.
\]
Then, we have that
\[
u\equiv a_i\ \ \textrm{for some}\ i\in \{1,\cdots,N \}.
\]

\end{thm}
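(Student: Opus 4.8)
The plan is to squeeze the energy between an upper bound coming from the structure near a single well and the universal lower bound \eqref{eqGrande}; these turn out to be incompatible unless $u$ is constant. First I would pin down the behaviour at infinity. Since $u$ is a bounded solution, interior elliptic estimates give a uniform bound on $\nabla u$, so $W(u)$ is uniformly continuous; being nonnegative with $\int_{\mathbb{R}^n}W(u)<\infty$, it must satisfy $W(u(x))\to0$ as $|x|\to\infty$. By \eqref{eqpoints} and continuity this forces $\mathrm{dist}\big(u(x),\{a_1,\dots,a_N\}\big)\to0$, and since the exterior region $\{|x|\ge R_0\}$ is connected for $n\ge2$, the nearest well is one and the same $a_{i_0}$ throughout. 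Writing $\delta(R):=\sup_{|x|\ge R}|u(x)-a_{i_0}|$, we obtain $\delta(R)\to0$.

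The heart of the matter is a Caccioppoli estimate built on the variational maximum principle of \cite{alikakosPreprint}. Hypothesis \eqref{eqmonot} says $\tfrac{d}{dr}W(a_{i_0}+r\nu)>0$ for $0<r<r_0$, which is exactly the statement that $\nabla W(u)\cdot(u-a_{i_0})\ge0$ whenever $0<|u-a_{i_0}|<r_0$; equivalently, $\tfrac12|u-a_{i_0}|^2$ is subharmonic there. Testing \eqref{eqEq} against $\zeta^2(u-a_{i_0})$, with $\zeta$ a cut-off equal to $1$ on $B_R$ and supported in $B_{2R}$, using this favourable sign on the set where $|u-a_{i_0}|<r_0$ and absorbing the fixed complementary region into a constant $C_0$, I expect to arrive at
\[
\int_{B_R}|\nabla u|^2\,dx\ \le\ C_0+\frac{C}{R^{2}}\int_{B_{2R}\setminus B_R}|u-a_{i_0}|^2\,dx\ \le\ C_0+C\,\delta(R)^2R^{\,n-2}.
\]
Notably this uses only that $u$ is a bounded solution together with \eqref{eqmonot}; the global minimality is not needed here.

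Combining this with the finiteness of the potential energy closes the argument. Set $P_\infty:=\int_{\mathbb{R}^n}W(u)<\infty$. For $n\ge3$ the displayed bound gives $R^{2-n}\int_{B_R}\big(\tfrac12|\nabla u|^2+W(u)\big)\,dx\le R^{2-n}(C_0+P_\infty)+C\,\delta(R)^2\to0$, which contradicts the lower bound $\ge cR^{n-2}$ of \eqref{eqGrande} (i.e. $\ge c>0$ after dividing by $R^{n-2}$) unless $u$ is constant. For $n=2$ the same bound shows that the total energy $\int_{\mathbb{R}^2}\big(\tfrac12|\nabla u|^2+W(u)\big)\,dx$ is finite, which contradicts the growth $\ge c\ln R$ in \eqref{eqGrande} unless $u$ is constant. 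In either case $u$ is constant, and then $\int_{\mathbb{R}^n}W(u)<\infty$ forces $W(u)=0$, i.e. $u\equiv a_{i_0}$ for some $i_0\in\{1,\dots,N\}$.

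The main obstacle I anticipate is the bookkeeping in the Caccioppoli step: one must check that $\{|u-a_{i_0}|\ge r_0\}$ is bounded (so that it contributes only the fixed constant $C_0$, with integrand controlled since $u$ is bounded and $W\in C^1$), that on its complement \eqref{eqmonot} genuinely yields $\nabla W(u)\cdot(u-a_{i_0})\ge0$, and that a Young inequality absorbs the cross term generated by $\nabla\zeta$ into the left-hand side. This is precisely where \eqref{eqmonot} and the maximum-principle philosophy borrowed from \cite{alikakosPreprint,bethuel} are indispensable; the rest is the interplay between the resulting upper bound and the universal lower bound \eqref{eqGrande}.
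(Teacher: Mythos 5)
Your proof is correct, but it takes a genuinely different route from the paper's. The paper rescales, setting $u_\varepsilon(y)=u(y/\varepsilon)$ with $\varepsilon=1/R$, and runs a B\'ethuel--Brezis--H\'elein covering argument to produce good spheres $|x|=s_R\in(R/4,3R/4)$ on which $\max W(u)\to 0$; it then invokes the variational maximum principle of \cite{alikakosPreprint} --- this is precisely where global minimality and \eqref{eqmonot} enter --- to propagate the smallness of $|u-a_{i_j}|$ from the sphere $|x|=s_R$ into the whole ball $B_{s_R}$, concluding $u\equiv a_{i_0}$ directly and never using the lower bound \eqref{eqGrande}. You instead extract uniform decay at infinity (uniform continuity of $W(u)$, from interior gradient bounds, plus integrability forces $W(u(x))\to 0$ as $|x|\to\infty$ --- in fact stronger than the paper's good-sphere selection), trap $u$ near a single well $a_{i_0}$ by connectedness of the exterior region, and then run a Caccioppoli estimate whose favourable sign $\nabla W(u)\cdot(u-a_{i_0})\ge 0$ for $0<|u-a_{i_0}|<r_0$ is indeed a valid consequence of \eqref{eqmonot}, since a strictly increasing $C^1$ function of $r$ has nonnegative derivative; the resulting bound $\int_{B_R}|\nabla u|^2\le C+C\,\delta(R)^2R^{n-2}$, combined with finite potential energy, contradicts \eqref{eqGrande} for nonconstant $u$ (dividing by $R^{n-2}$ when $n\ge 3$; via finiteness of the total energy against $c\ln R$ when $n=2$). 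The comparison is instructive: your argument nowhere uses global minimality --- \eqref{eqGrande} is stated for every nonconstant solution, and your Caccioppoli step needs only that $u$ is a bounded solution together with \eqref{eqmonot} --- so you in fact prove the stronger Liouville theorem that every \emph{bounded} solution with finite potential energy is constant; the trade-off is that you rely on the monotonicity-formula lower bound of \cite{alikakosBasicFacts} as a black box (which the paper quotes in its introduction but does not use in its proof), whereas the paper relies instead on the maximum principle of \cite{alikakosPreprint}. It is worth noting that the Ginzburg--Landau vortex ($m=2$, $W(u)=\tfrac14(1-|u|^2)^2$), a nonconstant solution in $\mathbb{R}^2$ with $\int W(u)<\infty$, shows that your single-well trapping step is where the finitely-many-wells hypothesis \eqref{eqpoints} is genuinely indispensable.
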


\begin{proof}
It follows that there exists a constant $C_0>0$ such that
\begin{equation}\label{eqj1}
\int_{B_R}^{}W\left(u(x) \right) dx\leq C_0,\ \ R>0.
\end{equation}
Let
\[
\varepsilon=\frac{1}{R}\ \ \textrm{and}\ \
u_\varepsilon(y)=u\left(\frac{y}{\varepsilon} \right),\ \ y\in B_1.
\]
Then, relation (\ref{eqj1}) becomes
\begin{equation}\label{eqj2}
\int_{B_1}^{}W\left(u_\varepsilon(y) \right) dy\leq
C_1\varepsilon^n,\ \ \varepsilon>0,
\end{equation}
for some $C_1>0$. Note that, by standard elliptic regularity
estimates \cite{Gilbarg-Trudinger}, we have that
\begin{equation}\label{eqj3}
|u_\varepsilon|+\varepsilon|\nabla u_\varepsilon|\leq C_2\ \
\textrm{in}\ \mathbb{R}^n, \ \varepsilon>0,
\end{equation}
for some $C_2>0$.

Let $d>0$ be any small number. As in \cite{bethuel}, by combining
(\ref{eqj2}) and (\ref{eqj3}), we deduce that the set where
$W(u_\varepsilon)$ is above $d>0$ is included in a uniformly bounded
number of balls of radius $\varepsilon$, as $\varepsilon\to 0$.
Certainly, there exists $r_\varepsilon \in
(\frac{1}{4},\frac{3}{4})$ such that
\[
W\left(u_\varepsilon(y) \right)\leq d\ \ \textrm{if}\
|y|=r_\varepsilon.
\]
Since $d>0$ is arbitrary, we are led to $\tilde{r}_\varepsilon \in
(\frac{1}{4},\frac{3}{4})$ such that
\[
\max_{|y|=\tilde{r}_\varepsilon}W\left(u_\varepsilon(y) \right)\to
0\ \ \textrm{as}\ \varepsilon\to 0.
\]
In terms of $u$ and $R$, we have
\[
\max_{|x|=s_R}W\left(u(x) \right)\to 0\ \ \textrm{as}\ R\to \infty,\
\ \textrm{for some}\ s_R \in \left(\frac{1}{4}R,\frac{3}{4}R\right).
\]

In view of (\ref{eqinf}), the above relation implies that there
exist $i_j\in \{1,\cdots, N \}$ such that
\[
\max_{|x|=s_R}\left|u(x)-a_{i_j} \right|\to 0\ \ \textrm{as}\ R\to
\infty.
\]
By virtue of (\ref{eqmonot}),  as in \cite{sourdis14}, exploiting
the fact that $u$ is a globally minimizing solution, we can apply a
recent variational maximum principle from \cite{alikakosPreprint} to
deduce that
\[
\max_{|x|\leq s_R}\left|u(x)-a_{i_j} \right|\leq
\max_{|x|=s_R}\left|u(x)-a_{i_j} \right|\ \ \textrm{for}  \ \ R\gg
1.
\]
 The above two
relations imply the existence of an $i_0\in \{1,\cdots,N\}$ such
that
\[
\max_{|x|\leq s_R}\left|u(x)-a_{i_0} \right|\to 0\ \ \textrm{as}\ \
R\to \infty.
\]
Since $s_R\to \infty$ as $R\to \infty$, we conclude that $u\equiv
a_{i_0}$.
\end{proof}


\begin{thebibliography}{50}
\bibitem{alikakosBasicFacts}
 {\sc N. D. Alikakos}, \emph{Some basic facts on the system
$\Delta u - W_u(u) = 0$}, Proc. Amer. Math. Soc. \textbf{139}
(2011), 153-–162.



\bibitem{alikakosPreprint}
{\sc N.D. Alikakos}, and {\sc G. Fusco}, \emph{A maximum principle
for systems with variational structure and an application to
standing waves},     arXiv:1311.1022




\bibitem{bethuel}
{\sc F. B\'{e}thuel}, {\sc H. Brezis}, and {\sc F. H\'{e}lein},
Ginzburg-Landau vortices, PNLDE \textbf{13}, Birkh\"{a}user Boston,
1994.

\bibitem{fuscoPreprint}
{\sc G. Fusco}, \emph{Equivariant entire solutions to the elliptic
system $\Delta u - W_u(u) = 0$ for general $G$-invariant
potentials}, Calc. Var. DOI 10.1007/s00526-013-0607-7


\bibitem{fuscoCPAA}
{\sc G. Fusco}, \emph{On some elementary properties of vector
minimizers of the Allen-Cahn energy}, Comm. Pure. Appl. Analysis
\textbf{13} (2014), 1045--1060.


\bibitem{Gilbarg-Trudinger}
{\sc D. Gilbarg}, and {\sc N. S. Trudinger}, Elliptic partial
differential equations of second order, second ed., Springer-Verlag,
New York, 1983.

\bibitem{sourdis14}
{\sc C. Sourdis}. \emph{Optimal energy growth lower bounds for a
class of solutions to the vectorial Allen-Cahn equation}, Arxiv
2014.
\end{thebibliography}
\end{document}